\DeclareMathOperator{\ord}{ord}
\DeclareMathOperator{\Diff}{Diff}
\DeclareMathOperator{\real}{Re}
\DeclareMathOperator{\Res}{Res}
\DeclareMathOperator{\CS}{CS}
\newcommand\parcial[1]{\dfrac{\partial}{\partial #1}}
\def\N{\mathbb N}
\def\R{\mathbb R}
\def\C{\mathbb C}
\def\Q{\mathbb Q}
\def\P{\mathbb P}
\def\Cd#1{(\C^#1,0)}
\theoremstyle{plain}
\newtheorem{theorem}{Theorem}[section]
\newtheorem{lemma}[theorem]{Lemma}
\newtheorem{Theorem}{Theorem}
\newtheorem{cor}[theorem]{Corollary}
\theoremstyle{definition}
\newtheorem{remark}[theorem]{Remark}
\newtheorem{example}[theorem]{Example}
\newcommand{\obra}[3]{{\sc #1} {\em #2}. {#3}.}
\author{Lorena L\'opez-Hernanz}
\address{Lorena L\'opez-Hernanz\\ Departamento de F\'isica y Matem\'{a}ticas\\
Universidad de Alcal\'a\\ Edificio de Ciencias, Carretera Madrid-Barcelona, Km. 33600\\ 28871 Alcal\'a de Henares, Madrid,
Spain} \email{lorena.lopezh@uah.es}
\author{Rudy Rosas}
\address{Rudy Rosas\\ Departamento de Ciencias, Pontificia Universidad Cat\'olica del Per\'u, Av. Universitaria 1801, Lima, Peru}\email{rudy.rosas@pucp.pe}
\date{}
\thanks{First author partially supported by Ministerio de Econom\'ia y Competitividad, Spain, process MTM2016-77642-C2-1-P; second author, by Vicerrectorado de Investigaci\'on de la Pontificia Universidad Cat\'olica del Per\'u}
\title[Characteristic directions of two-dimensional biholomorphisms]
{Characteristic directions of two-dimensional biholomorphisms}
\begin{document}
\maketitle

\begin{abstract} We prove that for each characteristic direction $[v]$ of a tangent to the identity diffeomorphism of order $k+1$ in $\Cd 2$ there exist either an analytic curve of fixed points tangent to $[v]$ or $k$ parabolic manifolds where all the orbits are tangent to $[v]$, and that at least one of these parabolic manifolds is or contains a parabolic curve. 

\end{abstract}

\section{Introduction}

Let $F\in\Diff\Cd2$ be a tangent to the identity diffeomorphism of order $k+1$, 
$$F(z)=z+F_{k+1}(z)+F_{k+2}(z)+\dots,$$
where $F_j(z)$ is a two-dimensional vector of homogeneous polynomials of degree $j$ and $F_{k+1}(z)\neq 0$. A \emph{characteristic direction} of $F$ is an element $[v]\in \P_{\C}^1$ such that $F_{k+1}(v)=\lambda v$ for some $\lambda\in\C$; it is called \emph{degenerate} if $\lambda=0$, and \emph{non-degenerate} otherwise. Characteristic directions are those complex directions along which some stable dynamics of $F$ can exist: 
\begin{enumerate}[$\circ$]
\item If there exists an irreducible analytic curve pointwise fixed by $F$, then its tangent line 
at $0\in\C^2$ is a characteristic direction of $F$. More generally, assume that there exists an irreducible analytic curve $\mathcal{C}$ invariant by $F$ in the sense of germs, i.e. $\mathcal{C}$ and $F(\mathcal{C})$ define the same germ at $0\in\C^2$. Then the tangent line of $\mathcal{C}$ at $0\in\C^2$ is also a characteristic direction of $F$. %
\item If an orbit $\{F^n(p)\}$ of $F$ is tangent to some complex direction $[v]$ at $0\in\C^2$, then $[v]$ is a characteristic direction of $F$ (see \cite[Proposition 2.3]{Hak}). 
\end{enumerate} 
The following natural question arises: does every characteristic direction $[v]$ of $F$ have some stable dynamics associated to it?  This question has been addressed by several authors. \'Ecalle \cite{Eca} and Hakim \cite{Hak} gave a positive answer in the case where $[v]$ is non-degenerate, showing that there exist at least $k$ parabolic curves tangent to $[v]$. In the case where $[v]$ is degenerate, there are partial answers by Molino \cite{Mol}, Vivas \cite{Viv} or L\'opez et al. \cite{Lop-S, Lop-R-R-S}, among others, which guarantee, under some additional hypotheses, the existence of parabolic curves tangent to $[v]$ or of parabolic domains along $[v]$. In the particular case where $F$ has an isolated fixed point at the origin, even if all of its characteristic directions are degenerate, Abate proved in \cite{Aba} (see also \cite{Bro-C-L}) that one of the characteristic directions of $F$ supports at least $k$ parabolic curves.

In this paper we give a complete positive answer to the question above. Our result is the following:

\begin{Theorem}\label{main1} Let $F\in\Diff\Cd 2$ be a tangent to the identity diffeomorphism of order $k+1$, and let $[v]$ be a characteristic direction of $F$. Then at least one of the following possibilities holds:
\begin{itemize}
\item [1.] There exists an analytic curve pointwise fixed by $F$ and tangent to $[v]$.
\item [2.] There exist at least $k$ invariant sets $\Omega_1, \dots, \Omega_k$, where each $\Omega_i$ is either a parabolic curve tangent to $[v]$ or a parabolic domain along $[v]$ and such that all the orbits in $\Omega_1\cup\dots\cup \Omega_k$ are mutually asymptotic. Moreover, at least one of the invariant sets $\Omega_j$ is a parabolic curve.
\item [3.] There exist at least $k$ parabolic domains $\Omega_1, \dots, \Omega_k$ along $[v]$, where each $\Omega_i$ is foliated by parabolic curves and such that all the orbits in $\Omega_1\cup\dots\cup \Omega_k$ are mutually asymptotic.
\end{itemize}  
In particular, if $F$ has an isolated fixed point then for any characteristic direction $[v]$ there is a parabolic curve tangent to $[v]$.
\end{Theorem}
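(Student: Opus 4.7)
My plan is to attack the theorem by a blow-up procedure directed by the characteristic direction $[v]$, reducing, after finitely many steps, to a configuration where either a pointwise fixed analytic curve appears through the distinguished point of the exceptional divisor, or the lifted diffeomorphism has a non-degenerate characteristic direction at that point. In the first case I descend the fixed curve and land in possibility~1; in the second I apply the \'Ecalle--Hakim theorem to obtain parabolic curves upstairs and then push them down through the tower of blow-ups, producing the parabolic curves of possibility~2 or the parabolic domains foliated by parabolic curves of possibility~3.

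As a first step I would blow up the origin of $\C^2$ and write the lift $\widetilde F$ in a chart centered at the point $p_{[v]}$ of the exceptional divisor $E\cong\P^1_\C$. The map $\widetilde F$ is again tangent to the identity at $p_{[v]}$, and $E$ is pointwise fixed if and only if $[v]$ is degenerate. The ``continuation'' of $[v]$ under the blow-up is a characteristic direction of $\widetilde F$ at $p_{[v]}$; if it is non-degenerate, Hakim already provides $k$ parabolic curves tangent to it and I am essentially done. Otherwise I iterate the blow-up along this new degenerate direction, producing a finite chain of centers. The key auxiliary result to prove is a Seidenberg-type termination theorem: some semi-invariant --- built from the order of tangency to the identity together with a residue-type index in the spirit of Camacho--Sad and Abate --- strictly drops under each blow-up, so that the process stops after finitely many steps. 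The terminal vertex is then either pierced by a pointwise fixed analytic curve (yielding case~1 after blowing down) or supports a non-degenerate characteristic direction for the final lifted map.

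In the non-degenerate terminal case, Hakim's theorem furnishes $k'$ parabolic curves tangent to that direction, where $k'+1$ is the order of the final map. Pushing them down through the tower, each such curve becomes either a parabolic curve tangent to $[v]$ or, if it meets an intermediate exceptional component pointwise fixed by the corresponding lift of $F$, a parabolic domain along $[v]$ foliated by parabolic curves. Cases~2 and~3 in the statement reflect exactly this dichotomy: case~2 when at least one descendant survives as a parabolic curve, case~3 when every descendant becomes a parabolic domain. A multiplicity accounting, combining the orders contributed at each blow-up with the multiplicities of the intermediate fixed divisors, should give exactly $k$ invariant sets $\Omega_1,\dots,\Omega_k$, and mutual asymptoticity is automatic because all of them originate from parabolic curves sharing a common limit at the terminal vertex of the tower. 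The ``in particular'' clause is then immediate: an isolated fixed point at $0$ excludes case~1 and forces us into case~2 or case~3, both of which contain a parabolic curve tangent to $[v]$ (directly in case~2, as a leaf of the foliation in case~3).

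The main obstacle, and the bulk of the technical work, will be twofold: first, finding the correct semi-invariant that guarantees termination of the blow-up procedure in the fully degenerate case, since the index-theoretic arguments used by Abate for isolated fixed points do not transfer verbatim once curves of fixed points are allowed; and second, matching the total number of parabolic structures produced at the terminal vertex with the initial order $k+1$, which requires a precise analysis of how orders and multiplicities evolve across successive blow-ups and of how parabolic curves upstairs coalesce into --- or stay separated from --- parabolic domains when they meet pointwise fixed components of the exceptional divisor. Compared with these two points, the local application of \'Ecalle--Hakim at the terminal vertex should be essentially routine.
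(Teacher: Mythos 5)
Your plan has a genuine gap at its core, and the terminal dichotomy it relies on is false. You propose to iterate blow-ups along the (degenerate) characteristic direction and claim that, thanks to an unproven ``Seidenberg-type termination theorem,'' the process ends either at a pointwise fixed analytic curve or at a non-degenerate characteristic direction, to which \'Ecalle--Hakim applies. But the genuinely hard case of the theorem is precisely the one where neither alternative ever occurs: when the only separatrix of the transformed infinitesimal generator is the exceptional divisor itself, no amount of further blowing up produces a non-degenerate characteristic direction or a fixed curve through the relevant point. The paper isolates this case and handles it by working with the formal vector field $X=\log F$: either $X_q$ has a separatrix $S$ different from $D$, in which case Theorem~\ref{lrrs} (L\'opez--Raissy--Rib\'on--Sanz) already gives cases 1 or 2, or $D$ is the only separatrix, and then Lemma~\ref{unalisa} (a Camacho--Lins Neto--Sad type statement: a saturated vector field with a single, non-singular separatrix contains a saddle-node) guarantees that the resolution of $X$ ends at a saddle-node whose \emph{weak} separatrix lies inside the exceptional divisor. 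At such a point \'Ecalle--Hakim is of no use; the parabolic structures are produced by Vivas' theorem on degenerate directions, which yields the $rp\ge k$ parabolic domains foliated by parabolic curves of case 3. Your proposal contains no mechanism for this situation, and the semi-invariant you would need is exactly what you admit you do not have.

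A second, independent problem is your claim that a Hakim parabolic curve upstairs ``becomes a parabolic domain along $[v]$ foliated by parabolic curves'' after blow-down when it meets a fixed component of the exceptional divisor. Blowing down is a holomorphic map; the image of a one-dimensional invariant manifold is again one-dimensional, so pushed-down Hakim curves can never account for the two-dimensional parabolic domains of case 3. Indeed, as the paper remarks, in case 3 the orbits are not asymptotic to any formal curve, whereas orbits on curves obtained from \'Ecalle--Hakim at a terminal vertex would be asymptotic to a separatrix; so case 3 simply cannot arise from your construction. The correct division of labour is: separatrices of $\log F$ other than the divisor are treated by Theorem~\ref{lrrs} (giving case 1 or 2), and the separatrix-free situation is treated via the saddle-node normal form \eqref{eq:campoLiz} and Vivas' parabolic domains (giving case 3), with the count $k$ coming from the multiplicity $r\ge k$ of the divisor in the singular locus, not from a multiplicity bookkeeping of pushed-down Hakim curves.
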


In \cite{Ast}, Astorg et al. provide examples of polynomial diffeomorphisms in $\C^2$ of the form
$$P(x,y)=\left(x+\frac{\pi^2}4y+x^2+O(x^3), y-y^2+O(y^3)\right)$$
having a wandering Fatou component, and wonder whether these diffeomorphisms have any parabolic curves other than the one contained in $(y=0)$; applying Theorem~\ref{main1}, we will show in Example~\ref{ejemplo} that they do. As another consequence of our results, we will obtain in Corollary~\ref{generalMolino} a generalization of a result by Molino~\cite{Mol}. 

In the last section, we will analyze the particular case of characteristic directions with non-vanishing index, giving a positive answer to a conjecture by Abate in \cite{Aba2}.

\section{Formal vector fields}\label{sec:vectorfields}
In this section we recall some classical results on the resolution of formal vector fields and the existence of separatrices (see for instance \cite{Cam-S2} or \cite{Can-C-D}). Consider a formal vector field in $\Cd2$
$$X=A(x,y)\parcial x+B(x,y)\parcial y,$$ 
where $A,B\in \mathbb{C}[[x,y]]$.  We say that $X$ is \emph{non-singular} if $A$ or $B$ is a unit, otherwise we say that $X$ is \emph{singular}. If $A$ and $B$ have no common factor, we say that $X$ is \emph{saturated}. Any vector field $X$ can be written, in a unique way up to multiplication by a unit, as $$X=f\bar{X},$$ where $f\in\C[[x,y]]$ and $\bar{X}$ is a saturated vector field. 
The formal curve $(f)$ is the \emph{singular locus}
of $X$ and the vector field $\bar{X}$ is the \emph{saturation} of $X$. We say that $X$ is \emph{strictly singular} if its saturation is singular.

An irreducible formal curve $S=(g)$ is a \emph{separatrix} of $X$ if $X(g)\in S$. A separatrix of $X$ is called \emph{strict} if it is also a separatrix of the saturation of $X$; otherwise, it is called \emph{non-strict}.
Any component of the singular locus of $X$ is a separatrix of $X$; these separatrices are called \emph{fixed}. Clearly, a non-strict
separatrix is necessarily fixed.  If $X$ is non-singular at the origin, then the 
formal integral curve through the origin is the only separatrix of $X$. 

Assume that $X$ is singular, let $\pi:(M,D)\to\Cd2$ be the blow-up at the origin and, for any $q\in D=\pi^{-1}(0)$, denote by $X_q$ the transform of $X$ by $\pi$ at $q$, i.e. the unique formal vector field $X_q$ in $(M,q)$ such that $d\pi\cdot X_q=X$. Write $A(x,y)=A_{k+1}(x,y)+A_{k+2}(x,y)+\dots$ and $B(x,y)=B_{k+1}(x,y)+B_{k+2}(x,y)+\dots$ as sum of homogeneous polynomials, where $(A_{k+1},B_{k+1})\neq0$, and put
$$P_X(x,y)=xB_{k+1}(x,y)-yA_{k+1}(x,y).$$
Note that if $q\in D$ is a point where the transform of $X$ is strictly singular, then $q$ corresponds to a zero of $P_X$, and that the tangent line of a separatrix of $X$ is also a zero of $P_X$. If $P_X\equiv 0$, we say that $X$ is \emph{dicritical}, otherwise it is \emph{non-dicritical}. When $X$ is dicritical, the exceptional divisor $D$ is a non-strict separatrix of $X_q$ for all $q\in D$ and $X$ has infinitely many strict separatrices. When $X$ is non-dicritical, $D$ is a strict separatrix of $X_q$ for every point $q\in D$; moreover, each zero of the polynomial $P_X$ either is the tangent line of a fixed separatrix of $X$ or corresponds to a point $q\in D$ such that $X_q$ is strictly singular.

\subsection*{\emph{Reduced vector fields}}
Consider a singular saturated formal vector field $X$ in $\Cd2$. We say that $X$ is \emph{reduced} if the eigenvalues $\lambda_{1},\lambda_{2}$ of its linear part satisfy $\lambda_{1}\neq0$ and $\lambda_{2}/\lambda_{1}\not\in\Q_{>0}$;
if $\lambda_2\neq 0$ we say that $X$ is \emph{non-degenerate}, otherwise $X$ is called a \emph{saddle-node}. Reduced vector fields have exactly two formal separatrices, which are non-singular and transverse. Each of these separatrices is tangent to an eigenspace of the linear part of $X$: it is called \emph{strong} if it is tangent to an eigenspace of a non-zero eigenvalue, otherwise it is called \emph{weak}. Thus, a non-degenerate vector field has two strong separatrices, whereas a saddle-node has a strong one and a weak one. If $X$ is reduced, $\pi:(M,D)\to\Cd2$ is the blow-up at the origin and $q\in D$ corresponds to a separatrix $S$ of $X$, then the saturation of the transform of $X$ at $q$ is reduced: non-degenerate if $S$ is a strong separatrix and a saddle-node if $S$ is a weak separatrix.

\subsection*{\emph{Resolution and Camacho-Sad Theorem}} 
Let $X$ be a singular formal vector field in $\Cd2$. By Seidenberg's Theorem \cite{Sei}, there exists a finite composition $\pi\colon(M,E)\to\Cd2$ of blow-ups at strictly singular points such that for any point $q\in E=\pi^{-1}(0)$ the saturation of the transform of $X$ at $q$ is either non-singular or reduced. Any map $\pi$ as above is called a \emph{resolution} of $X$; as may be expected, there exists a unique minimal such $\pi$, which is called the \emph{minimal resolution} of $X$. A strict separatrix $S$ of $X$ is called \emph{weak} if after a resolution of $X$ the saturation $\bar X_q$ of the transform of $X$ at the point $q$ corresponding to $S$ is reduced and the strict transform of $S$ is a weak separatrix of $\bar X_q$; otherwise, it is called \emph{strong}. It is easy to see that this definition does not depend on the resolution. Note that in the definition of strong separatrices, besides those whose strict transform is a strong separatrix of a reduced saturation of a transform of $X$ after resolution, we also include those whose strict transform after resolution is the integral curve of a non-singular saturation of a transform of $X$; it is easy to see that any of the separatrices of the latter type becomes a separatrix of the former type after an additional blow-up. Camacho-Sad Theorem \cite{Cam-S} guarantees the existence of at least one strict separatrix of $X$, which moreover is strong.

\section{diffeomorphisms}\label{sec:diffeomorphisms}
Let $F\in\Diff (\C^2,0)$ be a tangent to the identity diffeomorphism of order $k+1$, and write
$$F(z)=z+F_{k+1}(z)+F_{k+2}(z)+\dots,$$
where $F_j(z)$ is a vector of homogeneous polynomials of degree $j$ and $F_{k+1}(z)=(p_{k+1}(z),q_{k+1}(z))\neq0$. Clearly, the characteristic directions of $F$ are the zeros of the polynomial $xq_{k+1}(x,y)-yp_{k+1}(x,y)$, writing  $z=(x,y)$. It is well known that there exists a unique formal vector field $X$ in $\Cd2$ of order at least two such that $F=\exp X,$
where 
$$\exp X=\left(\sum_{n=0}^\infty \frac{X^n(x)}{n!},\sum_{n=0}^\infty\frac{X^n(y)}{n!}\right).$$
This vector field is called the \emph{infinitesimal generator} of $F$ and denoted by $\log F$. 

An irreducible formal curve $S=(g)$ is a \emph{separatrix} of $F$ if $g\circ F\in S$; it is \emph{fixed} if there is a convergent generator $h\in S$ such that the set $h=0$ is pointwise fixed by $F$. The following properties relating the separatrices of $F$ and $\log F$ are well known (see for example \cite{Rib} or \cite{Bro-C-L}): a formal curve is a separatrix of $F$ if and only if it is a separatrix of $\log F$, and it is a fixed separatrix of $F$ if and only if it is a fixed separatrix of $\log F$. In particular, the reduced 
singular locus of $\log F$ is convergent and coincides with the set of fixed points of $F$. Moreover, the order of $\log F$ is exactly $k+1$, and its jet of order $k+1$ is
$$p_{k+1}(x,y)\parcial x+q_{k+1}(x,y)\parcial y.$$
Therefore, the polynomial $xq_{k+1}(x,y)-yp_{k+1}(x,y)$, whose zeros are the characteristic directions of $F$, coincides with the polynomial $P_{\log F}(x,y)$ defined in Section~\ref{sec:vectorfields}.

Let $\pi:(M,D)\to\Cd2$ be the blow-up at the origin and let $q\in D$ be a point corresponding to a characteristic direction of $F$. There exists a unique diffeomorphism $F_q\in\Diff(M,q)$, called the \emph{transform} of $F$ by $\pi$ at $q$, such that $\pi\circ F_q=F\circ\pi$. This diffeomorphism $F_q$ is tangent to the identity, its order is greater than or equal to the order of $F$ and its infinitesimal generator is the transform of $\log F$ by $\pi$ at $q$ (see \cite{Bro-C-L}).

An orbit $O$ of $F$ has the \emph{property of iterated tangents} if $O$ converges to the origin and satisfies the following property: if $\pi_1:(M_1,D_1)\to\Cd2$ is the blow-up at the origin, then $\pi_1^{-1}(O)$ converges to a point $p_1\in D_1$; if $\pi_2:(M_2,D_2)\to(M_1,p_1)$ is the blow-up at $p_1$, then $\pi_2^{-1}(\pi_1^{-1}(O))$ converges to a point $p_2\in D_2$, and so on. The sequence $\{p_n\}$ is called the \emph{sequence of iterated tangents} of $O$. Two orbits $O_1$ and $O_2$ of $F$ are \emph{mutually asymptotic} if $O_1$ and $O_2$ have the property of iterated tangents and their sequences of iterated tangents coincide. When an orbit $O$ has the property of iterated tangents and its sequence of iterated tangents coincides with the sequence of infinitely near points of a formal curve $S$, we say that $O$ is \emph{asymptotic} to $S$; in this case, $S$ is necessarily a separatrix of $F$ (see \cite{Lop-R-R-S}).

\subsection*{\emph{Parabolic curves and parabolic domains}}
Let $F\in\Diff \Cd 2$ be a tangent to the identity diffeomorphism. A \emph{parabolic curve} (respectively, a \emph{parabolic domain}) of $F$ is a simply connected complex manifold $\Omega\subset\C^2$ of dimension one (respectively, two) with $0\in\partial \Omega$ which is positively invariant by $F$ and such that $F^n\rightarrow 0$ as $n\rightarrow 0$ uniformly on compact subsets of $\Omega$; if every positive orbit in $\Omega$ is tangent to a direction $[v]\in\P_\C^1$, we say that $\Omega$ is a parabolic curve \emph{tangent} to $[v]$ (respectively, a parabolic domain \emph{along} $[v]$).
 
The following theorem, proved by L\'opez, Raissy, Rib\'on and Sanz in \cite{Lop-R-R-S}, relates the existence of separatrices with the existence of parabolic curves. 

\begin{theorem}[{\cite[Theorem 7.1]{Lop-R-R-S}}]\label{lrrs} Let $F\in\Diff\Cd2$ be a tangent to the identity diffeomorphism of order $k+1$
and let $S$ be a separatrix of $F$. Then, either 
$S$ is fixed or there exist at least $k$ invariant sets $\Omega_1,...,\Omega_k$, where each $\Omega_i$ is either a parabolic curve or a parabolic domain of $F$ and such that every orbit in $\Omega_1\cup\dots\cup \Omega_k$ is asymptotic to $S$. Moreover, at least one of the sets $\Omega_j$ is a parabolic curve.
\end{theorem}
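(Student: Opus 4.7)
The plan is to trade the formal data of the separatrix $S$ for a reduced local model of the infinitesimal generator $X=\log F$, construct parabolic objects in that model using classical Leau-Fatou and Hakim-type machinery, and then push the resulting invariant sets down through the resolution to obtain the asserted $\Omega_1,\dots,\Omega_k$.

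First, I will pass to the minimal resolution $\pi\colon(M,E)\to\Cd2$ of $X=\log F$. By the correspondence recalled in Section~\ref{sec:diffeomorphisms}, $S$ is a separatrix of $X$, and is fixed for $F$ if and only if it is fixed for $X$, so I may assume throughout that $S$ is not fixed. The strict transform $\tilde S$ meets $E$ in a single point $q$, and the saturation $\bar X_q$ of the transform of $X$ at $q$ is either non-singular or reduced with $\tilde S$ one of its separatrices; the non-fixed hypothesis rules out $\tilde S$ being a non-strict separatrix at $q$. The transform $F_q=\exp(X_q)$ is tangent to the identity, has order $\ell+1\geq k+1$, and has $\tilde S$ as a non-fixed separatrix of the corresponding local type.

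Second, I will produce parabolic objects of $F_q$ attached to $\tilde S$ at $q$, case by case. If $\bar X_q$ is non-singular, then $\tilde S$ is convergent, and restricting $F_q$ to this holomorphic curve gives a one-variable tangent-to-identity germ whose multiplicity yields, by Leau-Fatou, a fan of attracting petals (which are parabolic curves), while a Hakim-type thickening transverse to $\tilde S$ provides accompanying parabolic domains. If $\bar X_q$ is reduced non-degenerate and $\tilde S$ is strong, the classical \'Ecalle-Hakim theory applied to $F_q$ along $\tilde S$ produces at least $\ell\geq k$ parabolic curves tangent to $\tilde S$. If $\bar X_q$ is a saddle-node and $\tilde S$ is its weak (central) separatrix, Fatou coordinates for the saddle-node generator yield a central parabolic domain foliated by parabolic curves; if $\tilde S$ is the strong separatrix of a saddle-node, Hakim-type constructions apply as in the non-degenerate case. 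In every branch I obtain at least $k$ invariant sets $\tilde\Omega_1,\dots,\tilde\Omega_k$ of $F_q$, with at least one of them a genuine parabolic curve.

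Third, I will push the invariant sets back down. Since $\pi$ is a biholomorphism off $E$, each $\tilde\Omega_i$ maps to an invariant set $\Omega_i:=\pi(\tilde\Omega_i)$ of $F$ which is again a parabolic curve or domain with $0\in\partial\Omega_i$. The key asymptotic statement is that if $\{F_q^n(\tilde p)\}$ converges to $q$ tangentially to $\tilde S$, then $\{F^n(\pi(\tilde p))\}$ has the property of iterated tangents and its sequence of iterated tangents equals the chain of infinitely near points of $S$, so the orbit is asymptotic to $S$; this follows inductively from the commutation of iterated tangencies with blow-ups. The parabolic curve obtained in the second step survives as a parabolic curve after push-down, supplying the ``at least one'' clause.

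The main obstacle I anticipate is obtaining the full count of $k$ invariant sets uniformly across the three local models, in particular propagating the sharp order $k+1$ of $F$ into the local multiplicity at $q$ that governs the number of petals; the saddle-node case, where $\tilde S$ may be a weak separatrix and where Hakim-type theory does not apply directly, requires the most delicate bookkeeping and presumably uses sectorial normalization of the saddle-node together with a transverse construction of further petals. A secondary technical point is verifying that each pushed-down set $\Omega_i$ remains simply connected with $0\in\partial\Omega_i$, which needs control of the closure of $\tilde\Omega_i$ near $E\setminus\{q\}$.
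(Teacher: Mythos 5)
First, a point of reference: the paper does not prove Theorem~\ref{lrrs} at all --- it is imported verbatim as \cite[Theorem 7.1]{Lop-R-R-S}, so there is no internal proof to compare yours with; your sketch has to stand on its own as a proof of that external result, and as it stands it does not. The central gap is that you systematically treat $S$, its strict transform $\tilde S$, and $\log F$ as convergent, holomorphic objects. A separatrix of $F$ is only a \emph{formal} invariant curve and $\log F$ is only a \emph{formal} vector field; both diverge in general. Consequently ``restricting $F_q$ to this holomorphic curve'' in the non-singular case is meaningless (the formal integral curve of a non-singular formal vector field need not converge), and ``Fatou coordinates for the saddle-node generator'' together with ``sectorial normalization of the saddle-node'' are not available, because there is no actual flow to normalize --- only the diffeomorphism $F_q$ and a formal generator. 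This divergence is precisely why \cite{Lop-S} and \cite{Lop-R-R-S} are nontrivial: the stable sets there are built by a fixed-point scheme for $F_q$ itself using only finite jets of a (ramified) formal normal form, not by integrating $X_q$.

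Three further steps are asserted rather than proved. (i) The weak-separatrix saddle-node case, which is the heart of the theorem (it is the only case in which parabolic domains rather than curves can be forced), is dispatched in one clause; this is exactly where the ``delicate bookkeeping'' you defer lives, and without it there is no proof. (ii) Producing, via \'Ecalle--Hakim, parabolic curves \emph{tangent} to the characteristic direction of $\tilde S$ at $q$ is strictly weaker than producing orbits \emph{asymptotic} to $S$: asymptoticity requires the entire sequence of iterated tangents of the orbit to match the infinitely near points of $S$, and tangency at the first level does not propagate automatically through the remaining blow-ups; your appeal to ``commutation of iterated tangencies with blow-ups'' assumes the conclusion at each stage. (iii) The clause that at least one $\Omega_j$ is a genuine parabolic curve is never argued in the branches where the natural output is a two-dimensional domain. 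The overall architecture (resolve $\log F$, analyze the reduced local model at the point where $\tilde S$ meets the divisor, push down) is the right one, but the analytic core of the theorem is missing.
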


\section{Proof of Theorem~\ref{main1}}\label{sec:proof}

Let $F\in\Diff\Cd 2$ be tangent to the identity and let $[v]$ be a characteristic direction of $F$. Let $\pi\colon (M,D)\to(\mathbb{C}^2,0)$ be the blow-up at $0\in\mathbb{C}^2$, and let $q\in D$ be the point corresponding to $[v]$. Denote by $X$ the infinitesimal generator of $F$ and let $X_q$ be the transform of $X$ by $\pi$ at $q$. If $X_q$ has a separatrix $S$ different from $D$, then $S$ defines a separatrix of $X$ and Theorem \ref{lrrs} finishes the proof. Otherwise, $D$ is the only separatrix of $X_q$, and by Camacho-Sad Theorem it is a strict separatrix of $X_q$, so $X$ is non-dicritical. Hence, since $[v]$ is a characteristic direction of $F$, either there is a fixed separatrix tangent to $[v]$, and we are done, or $X_q$ is strictly singular. Thus Theorem \ref{main1} is an immediate consequence of the following result. 

\begin{theorem}\label{parabolic-domain} Let $F\in\Diff\Cd 2$ be tangent to the identity of order $k+1$ and such that $\log F$ is strictly singular. Assume that $\log F$ has exactly one strict separatrix $S$, which moreover is non-singular. Then there exist at least $k$ parabolic domains $\Omega_1,\dots,\Omega_k$ with the following properties:
\begin{enumerate}[1.]
\item For each $j$ there exists an injective holomorphic map $\phi_j\colon \Omega_j\to\mathbb{C}^2$ such that  $$\phi_j\circ F\circ \phi_j^{-1}(x,y)=(x+1,y).$$  
\item For all $\mu\in\C$ and all $j$, the set $\phi_j(\Omega_j)\cap\{y=\mu\}$ is simply connected. Therefore, $\Omega_j$ is foliated by parabolic curves defined by the sets $y=\mu$, for $\mu\in\C$. 
\item After a finite number of blow-ups all the orbits in $\Omega_1\cup\dots\cup\Omega_k$ converge to the same point in the exceptional divisor $E$ and are asymptotic to the same component of $E$.
\end{enumerate}
\end{theorem}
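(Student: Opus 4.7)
The plan is to construct the parabolic domains after passing to the minimal resolution of $\log F$, building two-dimensional Fatou coordinates near the point of the exceptional divisor corresponding to the strict transform of $S$, and projecting everything back to $\Cd 2$.

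Choose coordinates $(x,y)$ on $\Cd 2$ with $S=\{y=0\}$, and let $\pi\colon(M,E)\to\Cd 2$ be the minimal resolution of $\log F$. Denote by $S^*$ the strict transform of $S$ and let $q^*=S^*\cap E$. Since $S$ is the unique strict separatrix of $\log F$ and Camacho--Sad forces the existence of a strong strict separatrix, $S$ is in fact strong; consequently, at $q^*$ the saturation of the transform of $\log F$ is reduced, $S^*$ is one of its separatrices, and the other local separatrix lies in a component of $E$. Pick local coordinates $(u,v)$ at $q^*$ with $S^*=\{v=0\}$ and the second local separatrix along $\{u=0\}$. Since $S$ is $F$-invariant, the restriction $F|_S$ is a tangent-to-the-identity germ in $\Diff\Cd 1$ of some order $m+1\ge k+1$, and $F_{q^*}|_{S^*}$ is conjugate to $F|_S$ through $\pi|_{S^*}$, hence also of order $m+1$ with $m\ge k$.

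Apply the classical Leau--Fatou flower theorem to $F_{q^*}|_{S^*}$ to obtain $m$ pairwise disjoint attracting petals $P_1,\dots,P_m\subset S^*$, each endowed with a one-dimensional Fatou coordinate $\tau_j\colon P_j\to\C$ that maps $P_j$ biholomorphically onto a right half-plane and satisfies $\tau_j\circ F_{q^*}|_{S^*}=\tau_j+1$. Extend each $\tau_j$ to a two-dimensional Fatou coordinate $T_j(u,v)$ on a tubular neighborhood $\tilde\Omega_j\subset M$ of $P_j$ by solving the functional equation $T_j\circ F_{q^*}=T_j+1$ iteratively, using the reduced (non-degenerate or saddle-node) structure of the saturation at $q^*$ to control the behavior in the transverse direction $v$. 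Set $\phi_j(u,v):=(T_j(u,v),v)$, which is an injective holomorphic map conjugating $F_{q^*}$ to $(x,y)\mapsto(x+1,y)$ on $\tilde\Omega_j$; the slices $\phi_j(\tilde\Omega_j)\cap\{y=\mu\}$ are simply connected because they correspond through $\phi_j^{-1}$ to the connected petal-shaped sets $\{v=\mu\}\cap\tilde\Omega_j$. Finally, define $\Omega_j:=\pi(\tilde\Omega_j)$ and transfer $\phi_j$ by $\pi^{-1}$; property~(3) is immediate because every orbit in $\Omega_j$ lifts to an orbit in $\tilde\Omega_j$ accumulating at $q^*\in E$.

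The main obstacle is the two-dimensional extension of the Fatou coordinate when the reduced saturation at $q^*$ is a saddle-node with $S^*$ strong, since the transverse dynamics is then itself tangent to the identity and the iteration converges only via delicate estimates (a two-dimensional flower argument in the saddle-node regime) rather than by a straightforward contraction. Complementary difficulties include proving that the resulting domains have simply connected, rather than merely open, $y$-slices, and verifying that distinct petals yield disjoint parabolic domains after projection by $\pi$; the ability to keep $\tilde\Omega_j$ tightly clustered around the strict transform $S^*$ is essential throughout.
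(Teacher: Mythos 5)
Your construction is anchored at the wrong point of the resolution, and the two steps you treat as routine are exactly the ones that fail. First, $S$ is only a \emph{formal} separatrix of the formal vector field $\log F$ (which is in general divergent), so it need not be a convergent curve: the restriction $F|_S$, the strict transform $S^*$, the petals $P_j\subset S^*$ and the one-dimensional Fatou coordinates $\tau_j$ are not available objects in the generality of the theorem. Dealing with orbits asymptotic to a possibly divergent separatrix is precisely the content of the deep Theorem~\ref{lrrs} of L\'opez--Raissy--Rib\'on--Sanz, which cannot be reproduced by the tubular-neighborhood extension you sketch. Second, even granting convergence of $S$, the ``main obstacle'' you flag is not a technicality but the heart of the matter: at $q^*$ the reduced saturation has $S^*$ as a \emph{strong} separatrix, and the transverse dynamics (non-degenerate reduced, or saddle-node in the strong direction) is in general not attracting on any open set around the petals, so no open parabolic \emph{domain} foliated by parabolic curves arises there; you would at best recover parabolic curves, not the two-dimensional statement. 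Finally, your argument for property~3 does not give what is claimed: orbits clustering around $S^*$ would be asymptotic to $S$, which is transverse to $E$, not ``asymptotic to the same component of $E$''; indeed the paper's remark stresses that the orbits in these domains are \emph{not} asymptotic to any formal curve.

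The paper's route is different and this difference is essential. Using Lemma~\ref{unalisa} (a Camacho--Lins Neto--Sad type statement proved by induction on the resolution), one finds in the resolution a point $q$ where the saturation of the transform of $\log F$ is a \emph{saddle-node}; the Camacho--Sad theorem forces its weak separatrix to be contained in the exceptional divisor (otherwise it would produce a second strict separatrix, or make the unique one weak). In coordinates adapted to this weak separatrix the transform takes the normal form \eqref{eq:campoLiz} with $r\ge k$, and Vivas' theorem applied to $F_q=\exp X_q$ produces the $rp\ge k$ parabolic domains together with the Fatou coordinates of property~1 and the simply connected slices of property~2. Property~3 is then proved by the explicit Leau--Fatou-type estimates showing $|z_n|\le C|u_n|^N$ for every $N$, i.e.\ the orbits are asymptotic to the divisor component $\{z=0\}$. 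Without locating this saddle-node with weak separatrix inside the divisor, and without invoking Vivas' construction (or an equivalent substitute), your proposal does not yield the parabolic domains asserted by the theorem.
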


The proof of Theorem~\ref{parabolic-domain} is based on Lemma~\ref{unalisa} below. We say that $X$ \emph{contains a saddle-node} if for some resolution $\pi$ of $X$ there exists a point in the exceptional divisor at which the saturation of the transform of $X$ is a saddle-node; it is easy to see that this property does not depend on the choice of the resolution. 
\begin{lemma}\label{unalisa}  Let $X$ be a singular saturated formal vector field in $\Cd2$. If $X$ has exactly one separatrix $S$, which moreover is 
non-singular, then $X$ contains a saddle-node. 
\end{lemma}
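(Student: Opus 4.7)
The plan is to proceed by induction on the length $N$ of the minimal resolution of $X$. The base case $N=0$ is vacuous, since a reduced singular saturated vector field has two separatrices and so cannot satisfy the hypothesis of a unique separatrix.

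For the inductive step, let $\pi\colon(M_1,D)\to\Cd 2$ be the blow-up at the origin and let $q\in D$ be the point corresponding to the tangent direction of $S$. Uniqueness of the separatrix forces $X$ to be non-dicritical, so $D$ is a strict separatrix of every saturated transform on $D$, and $\tilde S$ meets $D$ only at $q$. The first key observation closes one half of the step: if $\bar X_{q'}$ is singular at some $q'\in D$ with $q'\ne q$, then any non-exceptional separatrix of $\bar X_{q'}$ would project to a separatrix of $X$ distinct from $S$, so $\bar X_{q'}$ has $D$ as its unique separatrix; since $D$ is non-singular and the minimal resolution of $\bar X_{q'}$ is strictly shorter, applying the inductive hypothesis to $\bar X_{q'}$ yields a saddle-node in its resolution, and hence in that of $X$.

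It remains to handle the case where the only singular point of $\bar X_{\cdot}$ on $D$ is $q$. After a formal change of coordinates I arrange $S=\{y=0\}$; the unique root of $P_X$ being $[1:0]$ then gives $P_X(x,y)=c\,y^{k+2}$ with $c\ne 0$ (by non-dicriticality), which combined with $y\mid B$ forces
$$B_{k+1}(x,y)=y\,\tilde B(x,y),\qquad A_{k+1}(x,y)=x\,\tilde B(x,y)-c\,y^{k+1},$$
for a homogeneous polynomial $\tilde B$ of degree $k$. In the chart $x=u$, $y=uv$ at $q$, a direct computation gives $X_q=u^kY_q$ with $Y_q$ saturated, and the linear part of $Y_q$ reduces to $\tilde B(1,0)\,u\,\parcial u$: the coefficient of $u\,\parcial v$ equals $Q_{k+2}(1,0)=B_{k+2}(1,0)$, which vanishes because $y\mid B$. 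Hence $\bar X_q=Y_q$ has eigenvalues $\tilde B(1,0)$ and $0$, and if $\tilde B(1,0)\ne 0$ then $\bar X_q$ is a reduced saddle-node, completing the proof.

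The main obstacle is the degenerate sub-case $\tilde B(1,0)=0$. Then $\bar X_q$ has nilpotent linear part, is not reduced, and carries the two non-singular transverse separatrices $\{u=0\}$ and $\{v=0\}$, so the hypothesis of the lemma does not apply directly to $\bar X_q$. I would continue the resolution at $q$ and repeat the case analysis on the growing exceptional divisor, exploiting Camacho-Sad index constraints on each new component: in the absence of a saddle-node, the rigid index values (forced by the self-intersections) combined with the non-degeneracy condition $\mu/\lambda\notin\Q_{>0}$ should eventually either produce a positive rational eigenvalue ratio (contradicting the reduced non-degenerate assumption) or generate a new singular point on some exceptional component away from the strict transform of $\tilde S$, at which the first half of the inductive step closes the argument.
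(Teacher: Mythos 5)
Your inductive set-up, the treatment of a strictly singular point $q'\neq q$, and the explicit computation of the linear part of $\bar X_q$ (eigenvalues $\tilde B(1,0)$ and $0$, with the $u\,\partial/\partial v$ coefficient killed by $y\mid B$) are all correct and agree with what the paper does. But the sub-case $\tilde B(1,0)=0$, which you flag as "the main obstacle," is precisely where the entire difficulty of the lemma sits, and your proposal does not close it: at $q$ the saturated transform has \emph{two} separatrices ($D$ and the strict transform of $S$), so the inductive hypothesis of the one-separatrix statement cannot be applied there, and your fallback plan ("continue the resolution and exploit Camacho--Sad index constraints\dots should eventually\dots") is a hope rather than an argument. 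Nothing in what you wrote explains why iterating the blow-up must terminate in a saddle-node rather than, say, reproducing the same nilpotent configuration or a non-degenerate reduced model; the Camacho--Sad index relations by themselves do not obviously forbid this.

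The paper's resolution of exactly this obstruction is to run the induction \emph{jointly} with a second statement (Lemma~\ref{doslisas}): if a saturated vector field has exactly two separatrices, non-singular and transverse, then it is either reduced or contains a saddle-node. Since $\bar X_q$ has exactly the two separatrices $\{u=0\}$ and $\{v=0\}$ and its minimal resolution is shorter, the joint inductive hypothesis applies to it and yields the dichotomy; in the "reduced" branch, your own computation (linear part of the form $O(u)\,\partial/\partial u+O(u)\,\partial/\partial v$, hence with a zero eigenvalue) forces $\bar X_q$ to be a saddle-node. The two-separatrix lemma is in turn proved in the same induction, its own delicate case being two non-degenerate reduced transforms at $p_1$ and $p_2$, which forces $X$ itself to be reduced of order one. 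So the missing ingredient is not a computation but the auxiliary two-separatrix lemma and the simultaneous induction; without it (or some substitute such as the Camacho--Lins Neto--Sad argument it is modeled on), your proof is incomplete.
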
 

In the case of an analytic vector field, Lemma~\ref{unalisa} is a direct consequence of a result by Camacho, Lins Neto and Sad in \cite[Theorem 2]{Cam-L-S}, and their proof also works in the formal context. Alternatively, we can deduce the formal statement from the analytic one considering an appropriate truncation of the vector field. Anyhow, we will provide an alternative proof of the lemma by induction, and for technical reasons we will include the following result, which also appears in \cite[p. 162]{Cam-L-S}. 

\begin{lemma}\label{doslisas}  Let $X$ be a saturated formal vector field in $\Cd2$. Suppose that $X$ has exactly two separatrices $S_1$ and $S_2$, which moreover are non-singular and transverse. Then, either $X$ is reduced or $X$ contains a saddle-node. 
\end{lemma}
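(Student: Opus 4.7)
The plan is to prove Lemmas~\ref{unalisa} and \ref{doslisas} jointly, by induction on the number $n$ of blowups in the minimal Seidenberg resolution of $X$. I focus on Lemma~\ref{doslisas}; the companion statement is handled in parallel at each stage. The base case $n=0$ is trivial: then $X$ is already reduced. So suppose $n\geq 1$, that both lemmas hold for vector fields with strictly fewer blowups in their minimal resolution, and that $X$ is not reduced.

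Choose coordinates in which $S_1=(x=0)$ and $S_2=(y=0)$; since both are separatrices, $X=xA(x,y)\partial_x+yB(x,y)\partial_y$ for some $A,B\in\C[[x,y]]$. Setting $a=A(0,0)$ and $b=B(0,0)$, the polynomial $P_X$ of Section~\ref{sec:vectorfields} equals $(b-a)xy+\textrm{h.o.t.}$, so the case $a=b\neq 0$ would force $P_X\equiv 0$ and make $X$ dicritical with infinitely many separatrices, contradicting the hypothesis. Hence ``$X$ is not reduced'' splits into case (i) $a=b=0$ and case (ii) $ab\neq 0$, $a\neq b$, $b/a\in\Q_{>0}$. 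Now blow up once at the origin, with exceptional divisor $D$ and points $q_1,q_2\in D$ corresponding to $S_1,S_2$. At any $q\in D\setminus\{q_1,q_2\}$, a separatrix of $\bar X_q$ distinct from $D$ would descend to a separatrix of $X$ outside $\{S_1,S_2\}$, which is forbidden; so either $\bar X_q$ is non-singular or its sole separatrix is $D$, in which case the inductive hypothesis of Lemma~\ref{unalisa} supplies a saddle-node. Likewise $\bar X_{q_i}$ has exactly two non-singular transverse separatrices ($D$ and the strict transform of $S_i$), and if it is not reduced the inductive hypothesis of Lemma~\ref{doslisas} supplies a saddle-node. It therefore suffices to show, in each of cases (i) and (ii), that one of $\bar X_{q_1},\bar X_{q_2}$ is either a saddle-node or non-reduced.

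In case (ii), writing $a=\lambda p$, $b=\lambda q$ with $\gcd(p,q)=1$ and $p,q>0$, a direct chart computation yields eigenvalues $a-b$ along $D$ and $b$ transverse for $\bar X_{q_1}$, and $a$ transverse and $b-a$ along $D$ for $\bar X_{q_2}$. The corresponding transverse-over-tangential ratios $q/(p-q)$ and $p/(q-p)$ have opposite signs, so exactly one lies in $\Q_{>0}$ and the associated $\bar X_{q_i}$ is not reduced, finishing this case. In case (i), writing the linear parts of $A,B$ as $A_1=\alpha_1 x+\alpha_2 y$ and $B_1=\beta_1 x+\beta_2 y$, the polynomial $P_X$ becomes $xy(B_1-A_1)$, and the assumption on separatrices forces $B_1-A_1$ to be a nonzero scalar multiple of $x$ or of $y$. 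After dividing out the factor of $v$ (respectively $s$) that now appears in each transform, one of $\bar X_{q_1},\bar X_{q_2}$ is found to have eigenvalue zero along $D$, hence is a saddle-node if reduced and otherwise a non-reduced vector field handled by the inductive hypothesis. The subcase $A_1=B_1=0$ proceeds by the same argument applied to the next homogeneous jet of $A$ and $B$.

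The principal technical obstacle is the saturation bookkeeping in case (i): because the linear part of $X$ vanishes, both coefficients of the transforms $X_{q_i}$ become divisible by the equation of $D$, and the relevant eigenvalues only emerge after this factor is stripped off. One has to track exactly how many times $v$ (respectively $s$) divides each coefficient and to verify that no further common factor remains before reading off the linear part of $\bar X_{q_i}$. The joint induction with Lemma~\ref{unalisa} is indispensable, as it closes the argument uniformly at any extra singularities of $\bar X_q$ that might appear on $D$ outside $\{q_1,q_2\}$.
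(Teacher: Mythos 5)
Your overall skeleton (joint induction with Lemma~\ref{unalisa}, one blow-up, handling extra singular points on $D$ via Lemma~\ref{unalisa} and the points $q_1,q_2$ via Lemma~\ref{doslisas}) is the same as the paper's; where you diverge is the endgame, and that is where there is a genuine gap, in your case (i). You assert that ``the assumption on separatrices forces $B_1-A_1$ to be a nonzero scalar multiple of $x$ or of $y$''. This does not follow: a zero of $P_X=xy(B_1-A_1)$ off the axes need not be the tangent of a separatrix of $X$ --- it may simply correspond to a point $q_3\in D\setminus\{q_1,q_2\}$ where the transform is strictly singular and whose saturation has $D$ as its only separatrix. A concrete counterexample is $X=-x^2\parcial x+(y^3-y^2)\parcial y$: it is saturated, has zero linear part (your case (i)), its only separatrices are $x=0$ and $y=0$ (the third singular point of the blow-up, over the direction $[1:1]$, has a Jordan-type linear part $\operatorname{diag}(-1,-1)$ plus nilpotent and contributes no separatrix besides $D$), yet $B_1-A_1=x-y$ is not a multiple of $x$ or $y$. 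Worse, for this $X$ both $\bar X_{q_1}$ and $\bar X_{q_2}$ are non-degenerate reduced (eigenvalue ratio $-1$), so the statement you reduced to --- ``in case (i) one of $\bar X_{q_1},\bar X_{q_2}$ is a saddle-node or non-reduced'' --- is itself false; for such $X$ the saddle-node can only be found through the third point $q_3$.

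The repair is short and already contained in your own setup: before starting the case analysis you should dispose of the situation in which some $q\in D\setminus\{q_1,q_2\}$ has $X_q$ strictly singular (there your Lemma~\ref{unalisa} induction yields the saddle-node and you are done), and only \emph{then}, in the remaining situation, does the vanishing locus of $P_X(1,t)$ being confined to $t\in\{0,\infty\}$ force $B_1-A_1$ to be a multiple of $x$ or of $y$ (the case $B_1-A_1=0$ with $(A_1,B_1)\neq 0$ being excluded by dicriticality, as in your treatment of $a=b\neq0$); the same caveat applies to your deferred subcase $A_1=B_1=0$ at the next jet. With that reordering your eigenvalue computations at $q_1,q_2$ (which I checked, including the saturation by the divisor) do give a zero eigenvalue along $D$ at one of the two points, hence a saddle-node or a non-reduced field handled by induction, and case (ii) is correct as written. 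For comparison, the paper avoids this issue entirely by arguing in the opposite direction: after the same reduction it assumes both $\bar X_{q_1},\bar X_{q_2}$ are non-degenerate reduced and concludes by a direct computation that $X$ then has order one and is reduced, which is one of the allowed alternatives.
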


\begin{proof}[Proof of Lemmas~\ref{unalisa} and \ref{doslisas}]
Both lemmas are clearly true if $X$ is reduced.
Suppose that they hold for vector fields whose minimal resolution is obtained with at  
most $n\ge 0$ blow-ups. Let $X$ be as in Lemma~\ref{unalisa} and suppose that the minimal resolution of $X$ is obtained after $n+1$ blow-ups. Let $\pi$ be the blow-up at the origin and let $D$ be the exceptional divisor. Clearly $X$ is non-dicritical, otherwise $X$ would have infinitely many separatrices. Let $p$ be the point in $D$ corresponding to the tangent of $S$. Since the transform $X_p$ of $X$ at $p$ has two strict separatrices, it is strictly singular. Suppose first that the transform $X_q$ is strictly singular for some other point $q\in D\setminus\{p\}$. Note that $D$ is the only separatrix of the saturation $\bar X_q$ of $X_q$, otherwise $X$ would have a separatrix different from $S$. Therefore, by the inductive hypothesis Lemma~\ref{unalisa} holds for $\bar{X}_q$, so $\bar{X}_q$ contains a saddle-node and therefore $X$ contains a
saddle-node. Assume then that $p$ is the only point in $D$ such that $X_p$ is strictly singular. Since $S$ is non-singular, its strict transform by $\pi$ is transverse to $D$, so by the inductive hypothesis Lemma~\ref{doslisas} holds for the saturation $\bar{X}_p$ of $X_p$. If $\bar{X}_p$ contains a saddle-node, then $X$ contains a saddle-node and we are done. We assume then that $\bar{X}_p$ is reduced. 
Take coordinates $(x,t)$ at $p$ such that $\pi(x,t)=(x,tx)$. If 
$$A_{k+1}(x,y)\parcial x+B_{k+1}(x,y)\parcial y$$ 
is the first nonzero jet of $X$, then 
$$\bar{X}_{p}=O(x)\parcial x+\left[P_X(1,t)+O(x)\right]\parcial t,$$
where $P_X(x,y)=xB_{k+1}(x,y)-yA_{k+1}(x,y)$. Since $P_X$ has order at least two and $p$ is the only point in $D$ such that $X_p$ is strictly singular, we have that $P_X(1,t)$ has order at least two at $t=0$. Then $$\bar{X}_{p}=O(x)\parcial x+\left[O(t^2)+O(x)\right]\parcial t$$
and its linear part is of the form
$$O(x)\parcial x+O(x)\parcial t.$$ 
Therefore, since $\bar{X}_p$ is reduced, it is necessarily a saddle-node and we are done.
Consider now a vector field $X$ as in Lemma~\ref{doslisas} whose resolution is obtained after $n+1$ blow-ups, and let $\pi:(M,D)\to\Cd2$ be the blow-up at the origin. Clearly $X$ is again non-dicritical, the strict transforms of $S_1$ and $S_2$ are transverse to $D$ at points $p_1$ and $p_2$, respectively, and the transforms $X_{p_1}$ and $X_{p_2}$ of $X$ are strictly singular. Suppose that the transform $X_q$ is strictly singular at some point $q\in D\setminus\{p_1,p_2\}$. Observe that $D$ is the only separatrix of the saturation $\bar{X}_q$ of $X_q$, otherwise $X$ would have a separatrix different from $S_1$ and $S_2$. Then, by the inductive hypothesis, Lemma~\ref{unalisa} holds for $\bar{X}_q$ and therefore $\bar{X}_q$ contains a saddle-node. Thus, we assume that $p_1$ and $p_2$ are the only points in $D$ such that $X_{p_1}$ and $X_{p_2}$ are strictly singular. Again by the inductive hypothesis, Lemma~\ref{doslisas} holds for their saturations $\bar{X}_{p_1}$ and $\bar{X}_{p_2}$. If $\bar{X}_{p_1}$ or $\bar{X}_{p_2}$ is not reduced, $X$ contains a saddle-node. We assume then that $\bar{X}_{p_1}$ and $\bar{X}_{p_2}$ are reduced. If $\bar X_{p_1}$ or $\bar X_{p_2}$ is a saddle-node, $X$ contains a saddle-node. Otherwise, $\bar{X}_{p_1}$ and $\bar{X}_{p_2}$ are non-degenerate. In this case, taking formal coordinates so that $S_1$ and $S_2$ are the coordinate axes, a simple computation on the expression of $X_{p_1}$ and $X_{p_2}$ shows that the order of $X$ is one and that $X$ is reduced.
\end{proof}

\begin{proof}[Proof of Theorem~\ref{parabolic-domain}] Write $X=\log F$ and let $\pi\colon (M,E)\to\Cd2$ be 
a resolution of $X$. By Lemma~\ref{unalisa}, there exists a point $q\in E=\pi^{-1}(0)$ such that the saturation $\bar{X}_q$ of the transform $X_q$ of $X$ at $q$ is a saddle-node. 
Observe that, since the order of $X$ is $k+1\ge 2$, the components of $E$ at $q$ are contained in the singular locus of $X_q$, each of them with multiplicity at least $k$. Up to some additional blow-ups, we can assume that they are the only fixed separatrices of $X_q$. 
Suppose that the weak separatrix $S_w$ of $\bar{X}_q$ is not contained in the exceptional divisor. Then 
$S_w$ defines a strict separatrix of $X$, which coincides with $S$ since $S$ is the only strict separatrix of $X$. This is impossible, since
by Camacho-Sad Theorem the only strict separatrix of $X$ cannot be a weak separatrix. Therefore $S_w$ is contained in $E$ and so it is contained in
the singular locus of $X_q$. Consider holomorphic coordinates $(z,u)$ at $q$ such that $S_w=\{z=0\}$. Clearly, one component of $E$ is then given by $\{z=0\}$; if $q$ belongs to two components of $E$, we can assume that the other one is given by $\{u=0\}$.
Then, the vector field $X_q$ can be written in the form
\begin{equation}\label{eq:campoLiz}
X_q=z^ru^m\left[z\left(a+G(z,u)\right)\parcial z+\left(bz+H(z,u)\right)\parcial u\right],
\end{equation} 
where $r\ge k\ge1$, $m\ge 0$, $a\neq0$, $b\in\mathbb{C}$ and $G,H\in\mathbb{C}[[z,u]]$ satisfy $\ord G\ge 1$, $\ord H\ge 2$ and $H\not\in z\C[[z,u]]$. Hence, the transform $F_q$ of $F$ at $q$, which is equal to $\exp X_q$, has the form 
$$F_q=\left(z+z^{r+1}u^m\left[a+O(z,u)\right], u+z^ru^m\left[cu^{p+1}+O(z,u^{p+2})\right]\right),$$ 
with $c\neq0$ and $p\ge 1$. This kind of diffeomorphisms are studied by Vivas in
\cite[p.~2032]{Viv} and she proves that, after a linear change of coordinates so that $a=c=-1$, there exists a domain $\widetilde{\Omega}$ of the form
$$\widetilde\Omega=\left\{(z,u)\in\C^2: |z^ru^m-\varepsilon|<\varepsilon, |\arg(z^ru^m)|<\eta, |u^p-\delta|<\delta, |z|<|u|^M\right\},$$
for some $\varepsilon, \delta<1/2$ and $\eta>0$ sufficiently small and some $M\ge2$ sufficiently large, whose connected components $\widetilde\Omega_1, ...,\widetilde\Omega_{rp}$ are parabolic domains of $F_q$ and which moreover satisfies the following properties:
\begin{enumerate}[1.]
\item for any $j=1,...,rp$ there exists an injective holomorphic map $\varphi_j\colon \widetilde{\Omega}_j\to\C^2$ such that $\varphi_j\circ F_q\circ\varphi_j^{-1}(x,y)=(x+1,y);$
\item for any $\mu\in\C$ and any $j=1,...,rp$, the set $\varphi_j(\widetilde{\Omega}_j)\cap\{y=\mu\}$ is simply connected. 
\end{enumerate}
Clearly, the images by $\pi$ of the sets $\widetilde\Omega_1, ...,\widetilde\Omega_{rp}$ are parabolic domains of $F$ satisfying the two first properties of Theorem~\ref{parabolic-domain}. To prove the third one, we will show that any orbit in $\widetilde\Omega$ is asymptotic to $z=0$. Consider a point $(z,u)\in\widetilde\Omega$, and denote its orbit by $\{(z_n,u_n)\}$. Observe first that, since $|z|<|u|^M$ and $M\ge2$, we have
\begin{align*}
z_1^ru_1^m&=z^ru^m\left[1-rz^ru^m+z^ru^mO\left(u,zu^{-1}\right)\right]\\
&=z^ru^m-r\left(z^ru^m\right)^2+\left(z^ru^m\right)^2O(u),
\end{align*}
so arguing as in the classical Leau-Fatou Flower Theorem \cite{Lea, Fat} we obtain that $\{z_n^ru_n^m\}$ converges tangentially to the direction $\R^+$. Consider an integer $N\in\N$. We have that
\begin{align*}
\frac{|z_{n+1}|}{|u_{n+1}|^N}&=\frac{|z_n|}{|u_n|^N}\left|1-z_n^ru_n^m+z_n^ru_n^mO\left(u_n,z_nu_n^{-1}\right)\right|\\
&=\frac{|z_n|}{|u_n|^N}\left|1-z_n^ru_n^m+z_n^ru_n^mO(u_n)\right|.
\end{align*}
Since $\real(z_n^ru_n^m)>0$, $u_n\to0$ and $\{z_n^ru_n^m\}$ converges to $0$ with $\R^+$ as tangent direction, we can assume that $|z_n^ru_n^m|$, $\arg(z_n^ru_n^m)$ and $|u_n|$ are small enough so that the term $\left|1-z_n^ru_n^m+z_n^ru_n^mO(u_n)\right|$ above is bounded by 1 for all $n\ge n_0=n_0(N)$. This shows that $|z_n|\le C|u_n|^N$ for all $n\ge n_0$, with $C=|z_{n_0}|/|u_{n_0}|^N$, so the orbit is asymptotic to $z=0$. This ends the proof of Theorem~\ref{parabolic-domain}.
\end{proof}

\begin{remark}
It is worth to notice than in case 2 of Theorem~\ref{main1} all the orbits in $\Omega_1\cup\dots\cup\Omega_k$ are asymptotic to a separatrix of $F$, whether in case 3 all the orbits in $\Omega_1\cup\dots\cup\Omega_k$ have the property of iterated tangents, but are not asymptotic to any formal curve. Contrary to what we have in case 3, at the moment we cannot guarantee that the parabolic domains appearing in case 2 are foliated by parabolic curves.
\end{remark}

As a corollary of Theorems~\ref{lrrs} and \ref{parabolic-domain} we obtain the following result, which generalizes Molino's main theorem in~\cite[Theorem 1.6]{Mol}.
\begin{cor}\label{generalMolino}
Let $E$ be a smooth Riemann surface in a complex surface $M$, and let $F$ be a tangential germ of diffeomorphism of $M$ fixing $E$ pointwise and whose order of contact with $E$ is $k+1$. Let $p\in E$ be a singular point which is not a corner. Then there exists a parabolic curve of $F$ at $p$. More precisely, at least one of the following possibilities holds:
\begin{itemize}
\item[1.] There exist $k$ invariant sets $\Omega_1, \dots, \Omega_k$  of $F$ at $p$, where each $\Omega_i$ is either a parabolic curve or a parabolic domain and such that all the orbits in $\Omega_1\cup\dots\cup \Omega_k$ are mutually asymptotic. Moreover, at least one of the invariant sets $\Omega_j$ is a parabolic curve.
\item[2.] There exist $k$ parabolic domains $\Omega_1, \dots, \Omega_k$ of $F$ at $p$, where each $\Omega_i$ is foliated by parabolic curves and such that all the orbits in $\Omega_1\cup\dots\cup \Omega_k$ are mutually asymptotic.
\end{itemize}
\end{cor}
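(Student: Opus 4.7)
The plan is to apply Theorems~\ref{lrrs} and~\ref{parabolic-domain} after a local analysis at $p$. First I would choose coordinates $(x,y)$ near $p$ so that $E=\{y=0\}$ and $p=(0,0)$; this is possible because $E$ is smooth at $p$ (not a corner). Since $F$ fixes $E$ pointwise and has order of contact $k+1$ with $E$, we may write
\[
F(x,y)=\bigl(x+y^{k+1}A(x,y),\ y+y^{k+1}B(x,y)\bigr)
\]
for some $A,B\in\C\{x,y\}$, so $F$ is tangent to the identity at $p$ of some order $n+1\geq k+1$. The infinitesimal generator $X=\log F$ then lies in $(y^{k+1})$, hence is strictly singular at $p$ and has $E$ as a fixed component of its singular locus. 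Denote by $\bar X$ its saturation.

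Next I would analyze the strict separatrices of $X$ at $p$, equivalently, the separatrices of $\bar X$. The proof then splits into two cases. If $X$ has exactly one strict separatrix $S$ at $p$ and $S$ is non-singular, Theorem~\ref{parabolic-domain} applies directly and yields at least $n\geq k$ parabolic domains at $p$, each foliated by parabolic curves and carrying the required asymptoticity; this gives case~2 of the corollary. Otherwise, there is a strict separatrix $S$ of $X$ different from $E$. A key preliminary step is to show, using the hypotheses that $p$ is a singular point of $F$ not at a corner together with the structure $X=y^{k+1}(A\,\partial_x+B\,\partial_y)+\dots$, that the fixed set of $F$ near $p$ equals $E$ up to isolated points, so that $S$ is not pointwise fixed. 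Then Theorem~\ref{lrrs} produces at least $n\geq k$ invariant sets asymptotic to $S$, each a parabolic curve or parabolic domain, and at least one a parabolic curve; this gives case~1 of the corollary.

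The hard part will be controlling the genuinely degenerate subcase where $\bar X$ is itself singular at $p$, which occurs when $F$ has order strictly greater than $k+1$ at $p$; here $X$ could \emph{a priori} admit several competing strict separatrices of different types, and the simple dichotomy above could fail. I expect to handle this by an inductive reduction analogous to the proofs of Lemmas~\ref{unalisa} and~\ref{doslisas}: blowing up at $p$ preserves the tangential structure, with the new exceptional divisor joining $E$ as an additional fixed component of the transform of $X$. After finitely many such blow-ups the singular-point-not-corner hypothesis should force the configuration to match either the unique strict non-singular separatrix setting of Theorem~\ref{parabolic-domain} or to exhibit a non-fixed separatrix of the transformed diffeomorphism suitable for Theorem~\ref{lrrs}, at which point the resulting parabolic curves and domains descend via the composite blow-up to produce the objects claimed at $p$.
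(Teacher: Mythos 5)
Your overall route is the same as the paper's: split according to the strict separatrices of $X=\log F_p$ and invoke Theorem~\ref{parabolic-domain} or Theorem~\ref{lrrs} accordingly. Two points need fixing. First, the step ``$X$ lies in $(y^{k+1})$, hence is strictly singular at $p$'' is false as a derivation: the vector field $y^{k+1}\,\partial/\partial x$ lies in $(y^{k+1})$ but its saturation is non-singular. Strict singularity of $\log F_p$ is precisely what the hypothesis that $p$ is a \emph{singular} point provides (this is how the paper reads that hypothesis), and it is indispensable in your first branch, since Theorem~\ref{parabolic-domain} requires it; the conclusion genuinely fails at non-singular points, e.g.\ $F(x,y)=(x+y^{k+1},y)$ fixes $E=\{y=0\}$ pointwise, is tangential with order of contact $k+1$, and has no parabolic curves at any point of $E$. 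So you must invoke the singular-point hypothesis where you instead invoke the shape of $X$. Relatedly, tangentiality is what makes $E$ a strict separatrix of $\log F_p$, a fact you never record but which (or else Camacho--Sad) is needed in your second branch to know that some strict separatrix exists at all.

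Second, the ``hard part'' you defer does not exist, and the sketched inductive blow-up reduction modeled on Lemmas~\ref{unalisa} and \ref{doslisas} is unnecessary. Your two branches are already exhaustive: if $X$ has a strict separatrix $S$ different from the (smooth, hence non-singular) curve $E$ --- in particular if there are two or more strict separatrices, or a unique one which is singular --- then $S$ is not fixed, because ``not a corner'' means there is no fixed separatrix other than $E$, and Theorem~\ref{lrrs}, which does not require $S$ to be non-singular, yields case~1 of Corollary~\ref{generalMolino}; if instead $E$ is the only strict separatrix, Theorem~\ref{parabolic-domain} yields case~2. The situation you single out ($\bar X$ singular at $p$) is not a problematic degenerate subcase but the standing hypothesis that $p$ is singular, and it is unrelated to the order of $F_p$ exceeding $k+1$; that the order may be $n+1\ge k+1$ is harmless, as you note, since both theorems then produce at least $n\ge k$ of the required objects. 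With the strict-singularity step repaired and the final paragraph deleted, your argument coincides with the paper's proof.
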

\begin{proof}
Let $F_p$ be the germ of $F$ at $p$, which is a tangent to the identity diffeomorphism of order $k+1$. Since $E$ is smooth, $E$ defines a non-singular germ of curve at $p$. The fact of $F$ being tangential means that $E$ is a strict separatrix of $\log F_p$. Being a singular point means that $\log F_p$ is strictly singular, and being not a corner means that there is no other fixed separatrix of $\log F_p$. Therefore the result follows immediately from Theorem~\ref{lrrs} if there is another strict separatrix of $\log F_p$ and from Theorem~\ref{parabolic-domain} otherwise. 
\end{proof}

\begin{example}\label{ejemplo}
In \cite{Ast}, Astorg et al. show the existence of polynomial diffeomorphisms in $\C^2$ possessing a wandering Fatou component. These diffeomorphisms have the form
$$P(x,y)=\left(x+\frac{\pi^2}4y+x^2+O(x^3), y-y^2+O(y^3)\right),$$
and the authors wonder in \cite[p. 275]{Ast} if there are any parabolic curves apart from the one contained in the separatrix $S=(y=0)$. If $\pi$ is the blow-up at the origin and $q\in\pi^{-1}(0)$ is the point corresponding to the direction $[1:0]$, which is the only invariant line of $DP(0)$, the transform of $P$ by $\pi$ at $q$ has the form
$$P_q(x,t)=\left(x+x^2+\frac{\pi^2}4 xt+O(x^3),t-xt-\frac{\pi^2}4t^2+O(x^2t,xt^2,t^3)\right),$$
so it has three characteristic directions: $[1:0]$, $[0:1]$ and $[-\pi^2/4:1]$.
The two first ones are non-degenerate and give rise to two parabolic curves: the first one is contained in $(t=0)$, so it corresponds to the parabolic curve contained in $S$ after blow-down, and the second one is contained in the exceptional divisor $\pi^{-1}(0)$, so it disappears after blow-down. Since $P$ has an isolated fixed point at the origin, $P_q$ has no curve of fixed points tangent to $[-\pi^2/4:1]$. Then, by Theorem~\ref{main1}, there is at least one parabolic curve of $P_q$ tangent to that direction, which defines after blow-down a parabolic curve of $P$ not contained in $S$.
\end{example}

\section{Characteristic directions with non-vanishing index}\label{sec:final}

In this section we will focus on characteristic directions with non-vanishing index, which were studied under an additional asumption by Molino in \cite{Mol}. We begin by recalling the definition of the Camacho-Sad index of a vector field relative to a separatrix. Although the index can be defined for an arbitrary separatrix (see for example \cite{Bru}) we will only be interested in the case of non-singular ones. Consider a formal vector field $X$ in $\Cd2$ and let $S$ be a non-singular strict separatrix of $X$. In appropriate formal coordinates $(x,y)$ we have that $S=(y)$, so the saturation $\bar X$ of $X$ can be written as 
$$\bar X=A(x,y)\parcial x+yB(x,y)\parcial y,$$ 
with $A,B\in\C[[x,y]]$. The \emph{Camacho-Sad index} of $X$ relative to $S$ is defined as
$$\CS(X,S)=\Res_0\left(\frac{B(x,0)}{A(x,0)}\right).$$

Let $F\in\Diff\Cd2$ be a tangent to the identity diffeomorphism. Let $S$ be a non-singular separatrix of $F$, and assume that it is a strict separatrix of $\log F$. The \emph{residual index} of $F$ along $S$, introduced by Abate in \cite{Aba}, can defined as
$$\iota(F,S)=\CS(\log F,S).$$

We will prove the following result, that gives a positive answer to a conjecture by Abate in \cite[Conjecture 3.9]{Aba2}.

\begin{theorem}\label{generalAbate}
Let $F\in\Diff\Cd2$ be a tangent to the identity diffeomorphism of order $k+1$, and let $[v]$ be a characteristic direction of $F$. Let $\pi$ be the blow-up at the origin, $p\in D=\pi^{-1}(0)$ be the point corresponding to $[v]$ and $F_p$ be the transform of $F$ at $p$. If $\log F$ is non-dicritical and 
$\iota(F_p,D)\neq0$, then at least one of the following possibilities holds:
\begin{itemize}
\item [1.] There exists an analytic curve pointwise fixed by $F$ and tangent to $[v]$.
\item [2.] There exist at least $k$ parabolic curves tangent to $[v]$ where all the orbits are asymptotic to a strong separatrix of $\log F$.
\item [3.] There exist at least $k$ parabolic domains along $[v]$ which are foliated by parabolic curves and where all the orbits are mutually asymptotic.
\end{itemize}  
In particular, if $F$ has an isolated fixed point then there exist at least $k$ parabolic curves tangent to $[v]$.
\end{theorem}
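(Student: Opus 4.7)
The strategy is to deduce Theorem~\ref{generalAbate} from Theorem~\ref{main1} by using the extra hypothesis $\iota(F_p,D)\neq 0$ to refine the intermediate case of that result. First I would apply Theorem~\ref{main1} to $F$ at the direction $[v]$. Cases~1 and~3 of Theorem~\ref{main1} coincide verbatim with cases~1 and~3 of the present statement, so all the work concentrates on case~2 of Theorem~\ref{main1}. Inspecting its proof, that case is produced exactly when $X_p=\log F_p$ has a strict separatrix $S'$ different from $D$; the projection $S=\pi(S')$ is then a separatrix of $\log F$ tangent to $[v]$, and the $k$ invariant sets in the conclusion arise from applying Theorem~\ref{lrrs} to $S$.

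Upgrading this case requires two claims: (a) that $S$ is a strong separatrix of $\log F$, and (b) that every $\Omega_j$ is a parabolic curve rather than a parabolic domain. For~(a), I would analyse a resolution of $X_p$ obtained by iterated blow-ups along $S'$, ending at a reduced singularity $\bar X_r$ having the iterated strict transform of $S'$ as one of its separatrices and a component of the exceptional divisor as the other. If $\bar X_r$ is non-degenerate, or a saddle-node whose strong separatrix is the strict transform of $S'$, then $S$ is strong directly. The only remaining scenario---$\bar X_r$ a saddle-node in which the strict transform of $S'$ is the weak separatrix---is to be excluded using the behaviour of $\CS$ under blow-ups: the strong separatrix of $\bar X_r$ would then be a divisor component with $\CS=0$, and propagating this vanishing back through the chain of exceptional components joining $D$ to the component through $r$ via the additive blow-up formula one would obtain $\CS(X_p,D)=0$, contradicting $\iota(F_p,D)\neq 0$.

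For~(b), once $S$ is known to be strong, I would revisit the proof of Theorem~\ref{lrrs} in~\cite{Lop-R-R-S}: the parabolic domains in its conclusion correspond to stages of the resolution where the separatrix being tracked plays the role of a weak separatrix at a saddle-node, the situation just excluded by~(a). Hence all the $\Omega_j$ are in fact parabolic curves, giving case~2 of the theorem. The final assertion for $F$ with an isolated fixed point is then immediate: case~1 cannot occur, while case~3 supplies $k$ parabolic domains each foliated by parabolic curves tangent to~$[v]$, producing at least $k$ parabolic curves in that direction.

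The main obstacle is step~(a), namely the index bookkeeping along the resolution tree above~$p$. Although the blow-up behaviour of the Camacho-Sad index is controlled at each single stage, iterating it through a general chain of blow-ups demands careful combinatorial tracking of which divisor components carry contributions to $\CS(X_p,D)$ and which do not. I expect a clean argument by induction on the length of the chain of exceptional components joining $D$ with the component through $r$, in the spirit of the inductive proof of Lemmas~\ref{unalisa} and~\ref{doslisas}.
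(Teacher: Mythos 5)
The central gap is your step (a). You claim that whenever case~2 of Theorem~\ref{main1} occurs --- i.e.\ $X_p=\log F_p$ has a non-fixed separatrix $S'$ other than $D$ --- the hypothesis $\iota(F_p,D)\neq0$ forces $S'$ to be strong, the contradiction being obtained by ``propagating'' the vanishing of the index at the terminal saddle-node back along the chain of exceptional components to get $\CS(X_p,D)=0$. No such propagation exists: $\CS(X_p,D)$ is not computed along the branch of the resolution carrying $S'$; at each stage the relation $\sum_{q}\CS(X_q,D)=-1$ involves \emph{all} singular points on the component, and their contributions need not vanish. In fact the claim itself is false: one can have $\iota(F_p,D)\neq 0$ while $X_p$ possesses a weak separatrix off the divisor (for instance, a saddle-node on the first exceptional component of the resolution of $X_p$ whose weak separatrix is transverse to it, the nonzero index of $D$ being produced at the corner point and compensated by a third singular point), and the proof of Theorem~\ref{main1} may well select that weak separatrix. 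What is true --- and is exactly the paper's Lemma~\ref{unafuerte-dosfuertes}, proved by a simultaneous induction on a one-strong-separatrix and a two-strong-separatrices statement in the spirit of Lemmas~\ref{unalisa} and~\ref{doslisas}, and crucially under the \emph{second type} hypothesis --- is the weaker assertion that if $D$ is the \emph{only} strong separatrix of $X_p$ and no saddle-node of the resolution has its weak separatrix inside the divisor, then $\CS(X_p,D)=0$. Consequently the correct dichotomy is not ``the separatrix produced by Theorem~\ref{main1} is strong'', but rather: either $X_p$ has \emph{some} strong separatrix $S\neq D$, which one must deliberately choose, or it has none, in which case the lemma shows $X_p$ is not of second type, so a saddle-node with weak separatrix inside the divisor appears and Vivas' construction yields the $k$ parabolic domains of case~3, exactly as in the proof of Theorem~\ref{parabolic-domain}.

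Your step (b) is also a gap. Even once a strong separatrix $S$ is secured, the assertion that the invariant sets of Theorem~\ref{lrrs} are then automatically parabolic curves is an unproved claim about the internal structure of the proof in \cite{Lop-R-R-S}, and it is not true in general that orbits asymptotic to a strong separatrix organize themselves into one-dimensional stable sets (reduced singularities with positive irrational eigenvalue ratio, which are allowed in a resolution, already attach two-dimensional basins to strong separatrices). The paper does not use Theorem~\ref{lrrs} at this point: it blows up until the strict transform of $S$ is the strong separatrix, transverse to the divisor, of a reduced saturation, writes $X_q=x^r\left[(\lambda_1x+G)\partial_x+(\lambda_2y+H)\partial_y\right]$ with $r\ge k$ and $\lambda_1\neq0$, observes that $[1:0]$ is then a non-degenerate characteristic direction of $F_q=\exp X_q$, and invokes \'Ecalle--Hakim to produce the $k$ parabolic curves of case~2, with all orbits asymptotic to the strong separatrix $S$. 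To close your argument you would need either this reduction or a genuine verification inside \cite{Lop-R-R-S}; as written, both halves of your refinement of case~2 are unsupported, and the half carrying the index argument is aimed at a statement that is actually false.
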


The proof of Theorem~\ref{generalAbate} is based on Lemma~\ref{unafuerte-dosfuertes}. Consider a singular formal vector field $X$, ant let $\pi$ be a resolution of $X$. We say, following the terminology of \cite{Mat-S}, that $X$ is a \emph{second type} vector field if none of the saddle-nodes appearing in the resolution $\pi$ have their weak separatrices contained in the exceptional divisor; it is easy to see that this definition does not depend on the choice of the resolution.

\begin{lemma}\label{unafuerte-dosfuertes}
Let $X$ be a second type formal vector field in $\Cd2$. 
\begin{enumerate}[a)]
\item If $X$ has exactly one strong separatrix $S$, which moreover is non-singular, then $\CS(X,S)=0.$
\item If $X$ has exactly two strong separatrices $S_1$ and $S_2$, which moreover are non-singular and transverse, then
$\CS(X,S_1)\CS(X,S_2)=1.$
\end{enumerate}
\end{lemma}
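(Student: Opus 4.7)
The plan is to prove (a) and (b) simultaneously by induction on the depth of the minimal resolution of $X$, using the Camacho--Sad index formula along the exceptional divisor. The base case is when no blow-up is required. If $X$ is non-singular then $\CS(X,S)=0$ trivially; if $X$ is a reduced saddle-node with strong separatrix $S=(y=0)$, writing $\bar X=A\partial_x+yB\partial_y$ with $A(x,0)=\lambda x+O(x^2)$ ($\lambda\neq 0$) and $B(x,0)=O(x)$ (since the other eigenvalue vanishes) yields $\CS(X,S)=\Res_0 B(x,0)/A(x,0)=0$; and if $X$ is reduced non-degenerate with eigenvalues $\lambda_1,\lambda_2\neq 0$, its two strong separatrices carry indices $\lambda_2/\lambda_1$ and $\lambda_1/\lambda_2$, with product $1$.

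For the inductive step, let $\pi\colon (M,D)\to\Cd 2$ be the blow-up at the origin, and denote by $\bar X_q$ the saturation of the transform of $X$ at $q\in D$. Since $X$ has only finitely many strong separatrices it is non-dicritical, so $D$ is a strict separatrix of every $\bar X_q$; moreover, the second type hypothesis is exactly what ensures that $D$ is a \emph{strong} separatrix of each $\bar X_q$, because otherwise the further resolution of some $\bar X_q$ would produce a saddle-node with weak separatrix inside $D$, contradicting second type for $X$. I will combine the Camacho--Sad formula along the invariant curve $D$, which has self-intersection $-1$,
\[
\sum_{q\in D}\CS(\bar X_q,D)=-1,
\]
with the standard blow-up transformation rule
\[
\CS(\bar X_p,\tilde S)=\CS(X,S)-1,
\]
valid for any non-singular invariant curve $S$ through the origin whose strict transform $\tilde S$ meets $D$ at $p$; this is a short residue computation in the chart $\pi(u,t)=(u,ut)$.

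For (a), set $p=\tilde S\cap D$. Then $\bar X_p$ has exactly two strong separatrices, $\tilde S$ and $D$, both non-singular and transverse: any extra strong separatrix of $\bar X_p$ would blow down to a strong separatrix of $X$ different from $S$. At every other singular point $q\in D$, the same blow-down argument shows that $\bar X_q$ has only $D$ as strong separatrix. Each $\bar X_q$ again satisfies the second type hypothesis and its minimal resolution has strictly smaller depth, so the inductive hypotheses give $\CS(\bar X_p,\tilde S)\cdot\CS(\bar X_p,D)=1$ (from (b)) and $\CS(\bar X_q,D)=0$ for $q\neq p$ (from (a)). Substituting into the displayed identities produces
\[
\frac{1}{\CS(X,S)-1}=-1,
\]
whence $\CS(X,S)=0$. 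Case (b) is completely analogous, now with two transverse intersection points $p_1,p_2$; the substitution leads to
\[
\frac{1}{\CS(X,S_1)-1}+\frac{1}{\CS(X,S_2)-1}=-1,
\]
which simplifies algebraically to $\CS(X,S_1)\CS(X,S_2)=1$.

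The main obstacle is the careful bookkeeping of the strong/weak classification of separatrices after a blow-up: one must verify both that $D$ is always strong at each $q$ (this is precisely where the second type hypothesis is essential, and it also guarantees in the inductive application of (b) that the two indices are non-zero, so that the division above is legitimate) and that no unexpected strong separatrix of $\bar X_q$ appears at points $q$ away from the intersections of $D$ with the strict transforms of the $S_i$. Once this is in place, the argument is driven by the two identities displayed above, together with the inherited second type property of each $\bar X_q$.
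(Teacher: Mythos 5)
Your proof is correct and follows essentially the same strategy as the paper: induction on the number of blow-ups in the minimal resolution, treating the non-singular and reduced cases as the base, and combining the index formula $\sum_{q\in D}\CS(X_q,D)=-1$ along the exceptional divisor with the blow-up rule $\CS(X_p,\tilde S)=\CS(X,S)-1$ and the inductive hypotheses at the intersection points of the strict transforms with $D$. The only minor deviation is that you deduce that $D$ is a strong separatrix of every transform $X_q$ directly from the second-type hypothesis, whereas the paper uses second type only at the points $p$ (resp.\ $p_1,p_2$) and invokes the Camacho--Sad theorem at the remaining points $q$ (where no other separatrix of $X_q$ can be strong); both routes are valid and the rest of the argument, including the non-vanishing of the indices needed for the algebra, coincides with the paper's.
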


Before proving Lemma~\ref{unafuerte-dosfuertes}, we recall some of the properties of the Camacho-Sad index (see \cite{Cam-S}):
\begin{enumerate}[1.]
\item If $X$ is not strictly singular and $S$ is the formal integral curve through 0 of the saturation of $X$, then clearly $\CS(X,S)=0$.
\item If $\pi$ is the blow-up at the origin, $\tilde S$ is the strict transform of $S$, $p=\tilde S\cap\pi^{-1}(0)$ and $X_p$ is the transform of $X$ at $p$, then 
$$\CS(X_p,\tilde S)=\CS(X,S)-1.$$
\item If $X$ is non-dicritical, $\pi$ is the blow-up at the origin and $D$ is the exceptional divisor, then
$$\sum_{q\in D}\CS(X_q,D)=-1,$$
where $X_q$ is the transform of $X$ at $q$.
\item Suppose that the saturation $\bar X$ of $X$ is reduced, and let $S_1$ and $S_2$ be its separatrices. 
\begin{enumerate}[-]
\item If $\bar X$ is non-degenerate, then $\CS(X,S_1)\CS(X,S_2)=1$. 
\item If $\bar X$ is a saddle-node and $S_1$ is the strong separatrix, then $\CS(X,S_1)=0$.
\end{enumerate}
\end{enumerate}

\begin{proof}[Proof of Lemma~\ref{unafuerte-dosfuertes}] Observe first that any vector field satisfying the hypothesis of a) or b) is non-dicritical, otherwise it would have infinitely many strong separatrices. By properties 1 and 4 above, the lemma is true if the saturation of $X$ is non-singular or reduced. Suppose that it holds for vector fields whose minimal resolution is obtained with at most $n\ge 0$ blow-ups. Let $X$ be as in a) and suppose that its minimal resolution is obtained after $n+1$ blow-ups. Let $\pi$ be the blow-up at the origin and let $D$ be the exceptional divisor. Let $\tilde S$ be the strict transform of $S$ and $p=\tilde S\cap D$. Observe that $D$ is a strict separatrix of the transform $X_p$ of $X$ at $p$, since $X$ is non-dicritical, and that it is a strong separatrix of $X_p$, since $X$ is a second type vector field. Let $q\in D\setminus\{p\}$ be any point such that the transform $X_q$ of $X$ at $q$ is strictly singular. Since $X$ is non-dicritical, $D$ is a strict separatrix of $X_q$. Moreover, none of the other separatrices of $X_q$ are strong, otherwise $X$ would have a strong separatrix different from $S$. Then, by Camacho-Sad Theorem $D$ is a strong separatrix of $X_q$. Therefore, by the inductive hypothesis, the lemma holds for $X_q$, so $\CS(X_q,D)=0.$ Then, by property 3 above we deduce that $\CS(X_p,D)=-1$. Note that $D$ and the strict transform $\tilde S$ of $S$ are the only strong separatrices of $X_p$, otherwise $X$ would have a strong separatrix different from $S$. Then by the inductive hypothesis the lemma holds for $X_p$, so we obtain that $\CS(X_p,\tilde S)=-1$ and therefore $\CS(X,S)=0$, by property 2. Consider now a vector field $X$ as in b) whose minimal resolution is obtained after $n+1$ blow-ups, and let $\pi$ be the blow-up at the origin.
Let $p_1$ and $p_2$ be the points in the exceptional divisor $D$ corresponding to the tangents of $S_1$ and $S_2$, respectively. Let $q$ be any point in $D\setminus\{p_1,p_2\}$ such that the transform $X_q$ of $X$ at $q$ is strictly singular. As in the previous case, $D$ is a strict separatrix of $X_q$ and none of the other separatrices of $X_q$ are strong, so $D$ is a strong separatrix of $X_q$. Then, by the inductive hypothesis the lemma holds for $X_q$ and therefore $\CS(X_q,D)=0.$ 
By property 3, we obtain 
$$\CS(X_{p_1},D)+\CS(X_{p_2},D)=-1,$$
where $X_{p_1}$ and $X_{p_2}$ are the transforms of $X$ at $p_1$ and $p_2$. Observe that, since $X$ is a second type vector field, $D$ is necessarily a strong separatrix of $X_{p_1}$ and $X_{p_2}$. Then, by the inductive hypothesis the lemma holds for $X_{p_1}$ and $X_{p_2}$ and therefore we have 
$$\CS(X_{p_1},\tilde S_1)\CS(X_{p_1},D)=\CS(X_{p_2},\tilde S_2)\CS(X_{p_2},D)=1,$$
where $\tilde S_1$ and $\tilde S_2$ are the strict transforms of $S_1$ and $S_2$, respectively. From the two last equations we easily obtain, using property 2, that 
$\CS(X,S_1)\CS(X,S_2)=1.$\end{proof}

\begin{proof}[Proof of Theorem~\ref{generalAbate}]
Denote $X_p=\log F_p$ and let $\Pi:(M,E)\to\Cd 2$ be a resolution of $X_p$. Suppose first that $X_p$ has a strong separatrix $S$ different from $D$. If $S$ is fixed, we are done. Otherwise, up to an additional blow-up in case the transform of $X$ at the point corresponding to $S$ is not strictly singular, there exists a point $q$ belonging to exactly one component of $E$ such that the saturation of the transform $X_q$ of $X_p$ at $q$ is reduced and its strong separatrix, which is the strict transform of $S$, is transverse to $E$. Up to some additional blow-ups so that $E$ is the only fixed separatrix of $X_q$, there are some local coordinates $(x,y)$ at $q$ such that $E=\{x=0\}$ and such that $X_q$ is written as
$$X_q=x^r\left[\left(\lambda_1x+G(x,y)\right)\parcial x+\left(\lambda_2 y+H(x,y)\right)\parcial y\right]$$
with $r\ge k$, $\lambda_1\neq0$, $\lambda_2\in\C$ and $G,H\in\C[[x,y]]$, with $\lambda_2/\lambda_1\not\in\Q_{\ge0}$ and $\ord G, \ord H\ge2$.
Then, $[1,0]$ is a non-degenerate characteristic direction of the transform $F_q=\exp X_q$ of $F_p$ at $q$ and the existence of $k$ parabolic curves follows from \'Ecalle and Hakim Theorem \cite{Eca,Hak}. Assume now that $X_p$ has no strong separatrices different from $D$. Then, by Camacho-Sad Theorem $D$ is a strong separatrix of $X_p$, so $X_p$ cannot be a second type vector field, otherwise $\CS(X_p,D)=\iota(F_p,D)=0$ by Lemma~\ref{unafuerte-dosfuertes}. Therefore, there exists a point $q\in E$ such that the saturation of the transform $X_q$ of $X_p$ at $q$ is a saddle-node whose weak separatrix is contained in $E$, and arguing exactly as in the proof of Theorem~\ref{parabolic-domain} we can find coordinates $(z,u)$ at $q$ such that
$X_q$ is written as in \eqref{eq:campoLiz} and the existence of $k$ parabolic domains follows from Vivas' results in \cite{Viv}.
\end{proof}

\end{document}